\def\today{\ifcase\month\or
  January\or February\or March\or April\or May\or June\or
  July\or August\or September\or October\or November\or December\fi
  \space\number\day, \number\year}
\DeclareMathOperator{\supp}{\mathrm{supp}}
 \newtheorem{theorem}{Theorem}
 \newtheorem{lemma}[theorem]{Lemma} 
 \newtheorem{corollary}[theorem]{Corollary}
 \theoremstyle{definition}
 \theoremstyle{remark}
 \newcommand{\R}{\mathbb{R}}
 \newcommand{\Z}{\mathbb{Z}}
\newcommand{\im}{{\rm Im}\,}
\newcommand{\re}{{\rm Re}\,}
\newcommand{\norm}[1]{\left\lVert#1\right\rVert}
\newcommand{\Cplusfour}{1.17233}	
\newcommand{\oneoverCplusfour}{0.8531} 
\newcommand{\threeoverCplusfour}{2.5591}
\newcommand{\Cplusthreesixoneone}{1.1965}
\newcommand{\oneoverCplusthreesixoneone}{0.8358}
\title{Primes in arithmetic progressions and semidefinite programming}
\author[Chirre]{Andr\'es Chirre}
\address{Department of Mathematical Sciences, Norwegian University of Science and Technology, NO-7491 Trondheim, Norway}
\email{carlos.a.c.chavez@ntnu.no}
\author[Pereira J\'{u}nior]{Valdir Jos\'{e} Pereira J\'{u}nior}
\address{IMPA - Instituto Nacional de Matem\'{a}tica Pura e Aplicada - Estrada Dona Castorina, 110, Rio de Janeiro, RJ, Brazil 22460-320}
\email{valdirjosepereirajunior@gmail.com}
\author[de Laat]{David de Laat} 
\address{Delft Institute of Applied Mathematics\\
Delft University of Technology\\
P.O.\ Box 5031, 2600 GA Delft, The Netherlands}
\email{d.delaat@tudelft.nl}
\begin{document}


\allowdisplaybreaks
\numberwithin{equation}{section}
\maketitle

\begin{abstract}
Assuming the generalized Riemann hypothesis, we give asymptotic bounds on the size of intervals that contain primes from a given arithmetic progression using the approach developed by Carneiro, Milinovich and Soundararajan [Comment. Math. Helv. 94, no. 3 (2019)]. For this we extend the Guinand-Weil explicit formula over all Dirichlet characters modulo $q \geq 3$, and we reduce  the associated extremal problems to convex optimization problems that can be solved numerically via semidefinite programming.
\end{abstract}

\section{Introduction}

\subsection{Prime numbers} Denote by $\pi(x)$ the number of primes less than or equal to $x$. A classical theorem of Cram\'er \cite{cramer} states that, assuming the Riemann hypothesis (RH), for any $\alpha\geq 0$ there is a constant $c>0$ such that
\begin{align} \label{8_01_20202}
\dfrac{\pi\big(x+c\,\sqrt{x}\log x\big)-\pi(x)}{\sqrt{x}} > \alpha
\end{align}
for all sufficiently large $x$. The order of magnitude in this estimate has never been improved, and the efforts have thus been concentrated in optimizing the values of the implicit constants. Recently, Carneiro, Milinovich and Soundararajan \cite{CMS} used Fourier analysis to establish the best known values. This approach studies some Fourier optimization problems that are of the kind where one prescribes some constraints for a function and its Fourier transform, and then wants to optimize a certain quantity.

Let us define by $\mathcal{A}^{+}$ the set of even and continuous functions $F \colon \R\to\R$, with $F\in L^1(\R)$. For $1\leq A <\infty$ we write
\begin{equation} \label{17_53_08_20}
\mathcal{C}^{+}(A):=\sup_{\substack{\,\,\,\,F\in\mathcal{A}^{+}\\ F\neq 0}}\dfrac{1}{\norm{F}_1}\bigg(F(0)-A\int_{[-1,1]^c}\big(\widehat{F}\big)^{+}(t)\,dt\bigg),
\end{equation}
where we use the notation $f^{+}(x)=\max\{f(x),0\}$, $[-1,1]^c = \R \setminus [-1,1]$, and 
\[
\widehat{F}(t)=\int_{-\infty}^{\infty}F(x)e^{-2\pi ixt} \, dx.
\]
Assuming RH, \cite[Theorem 1.3]{CMS} establishes that for $\alpha\geq 0$,
\begin{align}  \label{1_20_8_04}
\inf\bigg\{c>0;\,\, \displaystyle\liminf_{x\to\infty}\,\dfrac{\pi(x+c\sqrt{x}\log x)-\pi(x)}{\sqrt{x}}> \alpha\bigg\}\leq \dfrac{(1+2\alpha)}{\mathcal{C}^{+}(36/11)}.
\end{align}
The numerical example from \cite[Eq (4.12)]{CMS} given by
\begin{equation} \label{20_00}
F(x)=-4.8\,x^2\,e^{-3.3x^2}+1.5\,x^2\,e^{-7.4x^2}+520\,x^{24}\,e^{-9.7x^2}+1.3\,e^{-2.8x^2}+0.18\,e^{-2x^2}
\end{equation}
shows that
$$
\mathcal{C}^{+}(36/11)>1.1943... >\dfrac{25}{21}.
$$
Therefore in \eqref{8_01_20202} for  $\alpha=0$ and $\alpha=1$ we can choose $c=0.8374$ and $c=2.512$, respectively. This improved the previous results established by Dudek \cite{Dud2} (see also \cite{Dud3}), who showed that for $\alpha=0$ and $\alpha=1$ we can choose $c= 1+\varepsilon$ and $c= 3+\varepsilon$, respectively, for any $\varepsilon>0$.

\subsection{Prime numbers in arithmetic progressions} Let  $q\geq 3$ and $b \geq 1$ be coprime. Denote by $\pi(x;q,b)$ the number of primes less than or equal to $x$ that are congruent to $b$ modulo $q$. Assuming the generalized Riemann hypothesis (GRH), Greni\'{e}, Molteni and Perelli \cite[Theorem 1]{GMP} stated the equivalent of the result by Cram\'er \eqref{8_01_20202} for primes in arithmetic progressions. They established that there are suitable constants $c_1, \alpha>0$ such that
\begin{align*}
\dfrac{\pi\big(x+c_1\,\varphi(q)\sqrt{x}\log x;q,b\big)-\pi(x;q,b)}{\sqrt{x}} > \alpha,
\end{align*}
for all sufficiently large $x$, where $\varphi(q)$ is Euler's function that counts the positive integers up to $q$ that are coprime to $q$. Our main goal in this paper is to show good bounds for the constant $c_1>0$.
\begin{theorem} \label{16_53_20_12}
	Assume the generalized Riemann hypothesis. Let  $q\geq 3$ and $b \geq 1$ be coprime. We have $\mathcal{C}^{+}(4) \geq \Cplusfour$, and, for any $\alpha\geq 0$,
	\begin{align*} 
	\inf\bigg\{c_1>0;\,\, \displaystyle\liminf_{x\to\infty}\dfrac{\pi\big(x+c_1\,\varphi(q)\sqrt{x}\log x;q,b\big)-\pi(x;q,b)}{\sqrt{x}}> \alpha\bigg\}\leq \dfrac{(1+2\alpha)}{\mathcal{C}^{+}(4)}< \oneoverCplusfour\,(1+2\alpha).
	\end{align*}
\end{theorem}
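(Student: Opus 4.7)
The plan is to follow the Carneiro-Milinovich-Soundararajan approach leading to \eqref{1_20_8_04}, adapting every step to primes in arithmetic progressions. I would start from the von~Mangoldt-weighted counting function $\psi(x;q,b)=\sum_{\substack{n\le x\\ n\equiv b\,(q)}}\Lambda(n)$, split it by orthogonality of Dirichlet characters as
$$\psi(x;q,b)=\frac{1}{\varphi(q)}\sum_{\chi\bmod q}\overline{\chi(b)}\,\psi(x,\chi),$$
and apply a Guinand-Weil-type explicit formula to each $\psi(x,\chi)$. A preliminary step is to extend the classical Guinand-Weil formula uniformly over \emph{all} Dirichlet characters modulo $q$ (not only the primitive ones), as announced in the abstract; this requires relating each $L(s,\chi)$ to its primitive inducing character and recording the finitely many Euler factors that are removed.

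Next I would feed in the test function $F\in\A^{+}$, rescaled at the natural frequency $(\varphi(q)/2\pi)\log x$, and use it as the Fourier-dual smoother to detect the interval $[x,\,x+c_1\varphi(q)\sqrt{x}\log x]$. The combined zero density across all characters modulo $q$ grows like $\varphi(q)\log(qT)$ up to height $T$; carefully tracking the constants through the rescaling, and bounding the resulting zero-sum by $\|F\|_1$ on the frequency band $[-1,1]$ and by $(\widehat{F})^{+}$ outside, produces the extremal quantity $\mathcal{C}^{+}(A)$ from \eqref{17_53_08_20} with $A=4$. Standard partial summation then transfers the bound from $\psi(x;q,b)$ to $\pi(x;q,b)$ and yields
$$\inf\left\{c_1>0:\,\liminf_{x\to\infty}\frac{\pi(x+c_1\varphi(q)\sqrt{x}\log x;q,b)-\pi(x;q,b)}{\sqrt{x}}>\alpha\right\}\leq\frac{(1+2\alpha)}{\mathcal{C}^{+}(4)}.$$
The delicate step is the uniform-in-$q$ bookkeeping: induced characters, trivial zeros, the gamma factors, and the $s=0,1$ contributions for the principal character $\chi_0$ all have to be controlled with no character-dependent losses, so that the coefficient $A=4$ emerges cleanly.

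For the explicit lower bound $\mathcal{C}^{+}(4)\geq \Cplusfour$, the plan is to restrict to the parametric family $F(x)=\sum_j c_j\, x^{2n_j}\, e^{-\beta_j x^2}$ (as in \eqref{20_00}), for which $F(0)$, $\|F\|_1$, and $\widehat{F}(t)$ all have closed-form expressions as polynomials times Gaussians. The tail integral $\int_{[-1,1]^c}(\widehat{F})^{+}\,\dt$ is the obstruction to a direct linear formulation; I would encode it by introducing a nonnegative majorant $G$ of the same polynomial-times-Gaussian form with $G\geq \widehat{F}$ on $[-1,1]^c$, and certify the two pointwise inequalities via sum-of-squares (SoS) representations, using the Luk\'acs-type multiplier $t^2-1$ to impose positivity on $|t|\geq 1$. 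Minimizing $\int_{[-1,1]^c} G\,\dt$ subject to these SoS constraints, for fixed exponents $\beta_j$, becomes a semidefinite program; an outer optimization over the nonlinear $\beta_j$ selects the candidate $F$. Rationally rounding the numerical optimum and recertifying the SoS decompositions produces an admissible $F\in\A^{+}$ witnessing $\mathcal{C}^{+}(4)\geq \Cplusfour$, and hence $1/\mathcal{C}^{+}(4)<\oneoverCplusfour$. The main practical hurdle is the joint tuning of the nonlinear exponents $\beta_j$ and the SoS degree so that the SDP is both tractable and tight enough to clear $\Cplusfour$.
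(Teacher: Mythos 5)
Your outline of the analytic half follows the same general strategy as the paper (extend the Guinand--Weil explicit formula to all Dirichlet characters modulo $q$, feed in a rescaled and $a^{iz}$-modulated bandlimited test function, and track constants through to an extremal Fourier problem), but it misidentifies where the value $A=4$ comes from.  It does not arise from any zero-density count.  In the paper, after applying the explicit formula the zero sum contributes $\varphi(q)\,\tfrac{\log(1/2\pi\Delta)}{2\pi}\|F\|_1$ and the prime sum over the interval $(ae^{-2\pi\Delta},ae^{2\pi\Delta}]$ contributes the $(\alpha+\varepsilon)$ term; the crucial constant $A$ appears only when one bounds the prime contribution from the \emph{tails} $[ae^{-2\pi\Delta N},ae^{-2\pi\Delta}]\cup[ae^{2\pi\Delta},ae^{2\pi\Delta N}]$.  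This is done with the Montgomery--Vaughan Brun--Titchmarsh inequality $\pi(x+y;q,b)-\pi(x;q,b)<2y/(\varphi(q)\log(y/q))$, applied with $y\approx\sqrt{x}$ so that $\log(y/q)\approx\tfrac12\log x$, which yields the factor $4/\varphi(q)$ in \eqref{16_57_20_12} and hence $A=4$ after a Riemann-sum argument.  Your sentence ``bounding the resulting zero-sum by $\|F\|_1$ on $[-1,1]$ and by $(\widehat F)^+$ outside'' conflates the two sides of the explicit formula, and without the arithmetic-progression Brun--Titchmarsh input there is no route to the factor $4$.  (For ordinary primes the CMS paper gets $36/11$ from a sharper Brun--Titchmarsh constant; here only Montgomery--Vaughan's $2$ is available.)

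The numerical part also has a genuine gap.  You propose optimizing directly over sums $F(x)=\sum_j c_j\,x^{2n_j}e^{-\beta_j x^2}$ and handling $\int_{[-1,1]^c}(\widehat F)^+$ with a SoS-certified majorant $G\geq\widehat F$ on $|t|\geq 1$, which is a reasonable device for the Fourier side, but you never address $\|F\|_1$.  Since $F$ changes sign (as in \eqref{20_00}), $\|F\|_1=\int|F|$ is neither linear nor convex in the coefficients $c_j$, so the problem you describe is not a semidefinite program even for fixed $\beta_j$, and there is no obvious way to certify $\|F\|_1$ rigorously without locating the sign changes of a transcendental function.  The paper circumvents exactly this obstruction by reformulating \eqref{17_53_08_20} as a convex problem over quadruples $(f_1,f_2,f_3,f_4)$ of \emph{nonnegative} $L^1$ functions with $f_1-f_2=\widehat f_3-\widehat f_4$ and normalization $\widehat f_1(0)+\widehat f_2(0)=1$: then $\|F\|_1\leq\widehat f_1(0)+\widehat f_2(0)=1$ is automatic and $(\widehat F)^+\leq f_3$, so the objective $f_1(0)-f_2(0)-A\int_{[-1,1]^c} f_3$ is linear.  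Restricting each $f_i$ to the single-Gaussian form $p_i(x^2)e^{-\pi x^2}$ with $p_i$ a SoS/weighted-SoS polynomial makes everything convex, with no outer nonlinear search over decay rates $\beta_j$, and allows rigorous post-processing in ball arithmetic.  Your multi-Gaussian family with an outer optimization over $\beta_j$ is strictly harder to make both tractable and rigorous, and without the $F=f_1-f_2$ split the $\|F\|_1$ term breaks the SDP structure.
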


\medskip

 In particular, for a fixed $q\geq 3$, there is an $x_0$ sufficiently large (depending on $q$), such that for $x\geq x_0$ and all $b$ modulo $q$ there is a prime $p$ that is congruent to $b$ modulo $q$ in the interval $(x,x+\oneoverCplusfour\,\varphi(q)\sqrt{x}\log x]$. Similarly, there is an $x_0$ sufficiently large (depending on $q$), such that there are at least $\sqrt{x}$ primes which are congruent to $b$ modulo $q$ in the interval $(x,x+\threeoverCplusfour\,\varphi(q)\sqrt{x}\log x]$. This result improves asymptotically some results from a recent work by Dudek, Greni\'{e}, and Molteni \cite[Theorems 1.1-1.3]{Dud}, where they established the constants $c_1=1$ and $c_1=3$ as opposed to $c_1=\oneoverCplusfour$ and $c_1=\threeoverCplusfour$, for $\alpha=0$ and $\alpha=1$, respectively (but where also values for $x_0$ and $q_0$ such that the claim holds for $x\geq x_0$ and $q\geq q_0$ are explicitly determined).

\begin{corollary} Assume the generalized Riemann hypothesis.  Let  $q\geq 3$ and $b \geq 1$ be coprime and denote by $p_{n,q,b}$ the $n$-th prime that is congruent to $b$ modulo $q$. Then
	$$
	\limsup_{n\to\infty}\dfrac{p_{n+1,q,b}-p_{n,q,b}}{\sqrt{p_{n,q,b}}\,\,\log p_{n,q,b}}<\oneoverCplusfour\,\varphi(q).
	$$
\end{corollary}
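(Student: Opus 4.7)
The plan is to deduce the corollary directly from Theorem~\ref{16_53_20_12} applied with $\alpha=0$. That theorem gives
$$
\inf\bigg\{c_1>0;\ \liminf_{x\to\infty}\frac{\pi(x+c_1\varphi(q)\sqrt{x}\log x;q,b)-\pi(x;q,b)}{\sqrt{x}}>0\bigg\} < \oneoverCplusfour,
$$
so I can fix some $c$ with $1/\mathcal{C}^{+}(4) < c < \oneoverCplusfour$ that belongs to the set on the left (note that the set is upward-closed, since enlarging the window only increases the count, so any $c$ strictly above the infimum is admissible). For this $c$ there exists $x_{0}=x_{0}(q,b,c)$ such that for every $x\geq x_{0}$
$$
\pi\bigl(x+c\,\varphi(q)\sqrt{x}\log x;q,b\bigr)-\pi(x;q,b)\geq 1,
$$
because the normalized counting difference stays bounded below by a positive constant while being a nonnegative integer. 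Hence every window $(x,\,x+c\,\varphi(q)\sqrt{x}\log x]$ with $x\geq x_{0}$ contains at least one prime congruent to $b$ modulo $q$.

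Next I would apply this with $x=p_{n,q,b}$ for $n$ so large that $p_{n,q,b}\geq x_{0}$. The interval $(p_{n,q,b},\,p_{n,q,b}+c\,\varphi(q)\sqrt{p_{n,q,b}}\log p_{n,q,b}]$ then contains a prime that is congruent to $b$ modulo $q$, and by definition the smallest such prime strictly greater than $p_{n,q,b}$ is $p_{n+1,q,b}$. Consequently
$$
p_{n+1,q,b}-p_{n,q,b} \leq c\,\varphi(q)\,\sqrt{p_{n,q,b}}\,\log p_{n,q,b}
$$
for all sufficiently large $n$, and dividing and passing to the limsup gives
$$
\limsup_{n\to\infty}\frac{p_{n+1,q,b}-p_{n,q,b}}{\sqrt{p_{n,q,b}}\,\log p_{n,q,b}} \leq c\,\varphi(q) < \oneoverCplusfour\,\varphi(q),
$$
which is the desired strict inequality.

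There is no substantive obstacle; the corollary is essentially a reformulation of Theorem~\ref{16_53_20_12} in the language of consecutive primes in the progression $b\pmod{q}$. The only point requiring a small amount of care is the passage from \emph{``the liminf of the normalized counting difference is strictly positive''} to \emph{``the window contains at least one admissible prime for all large $x$''}, which uses only that $\pi(\,\cdot\,;q,b)$ is integer-valued. The strict inequality in the conclusion is inherited directly from the strict bound $1/\mathcal{C}^{+}(4) < \oneoverCplusfour$ already recorded in the theorem, and in fact a slightly stronger statement (with $c\,\varphi(q)$ for any $c>1/\mathcal{C}^{+}(4)$ on the right-hand side) is obtained by the same argument.
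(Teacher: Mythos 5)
Your proof is correct and is the natural (and essentially unique) way to deduce the corollary from Theorem~\ref{16_53_20_12} with $\alpha=0$; the paper itself states the corollary without proof, and your argument---pick an admissible $c$ strictly between $1/\mathcal C^+(4)$ and $\oneoverCplusfour$ using upward-closedness of the set, observe that a positive liminf forces at least one admissible prime in each window for all large $x$, then take $x=p_{n,q,b}$---is exactly the intended derivation. The only minor phrasing slip is that the \emph{normalized} counting difference $(\pi(x+\cdots;q,b)-\pi(x;q,b))/\sqrt{x}$ is not integer-valued; what you mean is that the unnormalized difference is a nonnegative integer that must be $\geq\delta\sqrt{x}\geq 1$ for all large $x$, which is clear and harmless.
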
	

\medskip

\subsection{Optimized bounds}
It is mentioned in \cite{CMS} that they have found more complicated examples that do slightly better than the function $F$ given in \eqref{20_00}. Indeed, using semidefinite programming we can give a slight improvement on \cite[Theorem 1.3 and Corollary 1.4]{CMS}: We get $\mathcal C^+(36/11) \geq \Cplusthreesixoneone$, so assuming the Riemann hypothesis, for any $\alpha\geq 0$ we have in \eqref{1_20_8_04} that
\begin{align*} 
	\inf\bigg\{c>0;\,\, \displaystyle\liminf_{x\to\infty}\,\dfrac{\pi(x+c\,\sqrt{x}\log x)-\pi(x)}{\sqrt{x}}> \alpha\bigg\}<\oneoverCplusthreesixoneone\,(1+2\alpha),
\end{align*}
and
$$
\limsup_{n\to\infty}\dfrac{p_{n+1}-p_{n}}{\sqrt{p_{n}}\,\log p_{n}}<\oneoverCplusthreesixoneone.
$$
where  $p_n$ denotes the $n$-th prime.

%


\smallskip

\subsection{Strategy outline} The proof of the first inequality in Theorem~\ref{16_53_20_12} follows the ideas developed in \cite{CMS}. We will need three main ingredients: the Guinand-Weil explicit formula for the Dirichlet characters modulo $q$, the Brun-Titchmarsh inequality for primes in arithmetic progressions and the derivation of an extremal problem in Fourier analysis. We start establishing an extended version of the classical Guinand-Weil explicit formula, that contains certain sums that run over all Dirichlet characters modulo $q$. In particular, one of these sums allows us to count primes in an arithmetic progression, and we can bound many of these primes using the Brun-Titchmarsh inequality for primes in arithmetic progressions. Since many of the computations to derive the extremal problem are similar to \cite{CMS}, we will highlight the principal differences. 

For the second inequality in Theorem~\ref{16_53_20_12} we write the resulting optimization problem as a convex optimization problem over nonnegative functions. We then write these nonnegative functions as $f(x) = p(x^2) e^{-\pi x^2}$ for some polynomial $p$, as in the works of Cohn and Elkies \cite{cohn} for the sphere packing problem, and use semidefinite programming to optimize over these nonnegative functions, which is an approach employed recently for problems involving the Riemann zeta-function and other $L-$functions in \cite{andres,david}. 

\smallskip

\section{Guinand-Weil explicit formula and Brun-Titchmarsh inequality}

\subsection{Guinand-Weil explicit formula} The classical Guinand-Weil explicit formula \cite[Lemma 5]{CF} establishes the relation between the zeros of a primitive Dirichlet character modulo $q$ and the primes that are coprime to $q$. The following lemma states a version of this explicit formula that contains the sum over primitive and imprimitive Dirichlet characters modulo $q$.

\begin{lemma} \label{Guinand-weil}
Let  $q\geq 3$ and $b \geq 1$ be fixed coprime numbers. Let $h(s)$ be analytic in the strip $|\im{s}|\leq \tfrac12+\varepsilon$ for some $\varepsilon>0$, and assume that $|h(s)|\ll(1+|s|)^{-(1+\delta)}$ as $|\re{s}|\to\infty$, for some $\delta>0$. Then,
\begin{align*}
\displaystyle\sum_{\chi}\overline{\chi(b)}\displaystyle\sum_{\rho_\chi} &h\left(\frac{\rho_{\chi} - \tfrac12}{i}\right)
 = h\left(\dfrac{1}{2i}\right)+h\left(-\dfrac{1}{2i}\right)  + \frac{1}{2\pi}\displaystyle\sum_{\chi}\overline{\chi(b)}\int_{-\infty}^\infty h(u)\,{\rm Re}\,\frac{\Gamma'}{\Gamma}\bigg(\frac{1}{4}+\frac{\mathfrak{a}_\chi}{2}+\frac{iu}{2}\bigg)\,du \\
&  -\frac{\varphi(q)}{2\pi}\sum_{n\,\equiv\, b \,(\mathrm{mod}\, q)}\frac{\Lambda(n)}{\sqrt{n}}\,\widehat h\left(\frac{\log n}{2\pi}\right)-\frac{1}{2\pi}\sum_{n=2}^\infty\frac{\Lambda(n)}{\sqrt{n}}\Bigg(\displaystyle\sum_{\chi}\overline{\chi(bn)}\Bigg) \widehat h\left(\frac{-\log n}{2\pi}\right) + O\bigl(\|\widehat{h}\|_{\infty}\bigl),
\end{align*}
where $\chi$ runs over the Dirichlet characters modulo $q$ and $\rho_{\chi}$ are the non-trivial zeros of the Dirichlet $L$-function $L(s,\chi)$, $\Gamma'/\Gamma$ is the logarithmic derivative of the Gamma function, $\mathfrak{a}_\chi\in\{0,1\}$, and $\Lambda(n)$ is the Von-Mangoldt function defined to be $\log p$ if $n=p^m$ with $p$ a prime number and $m\geq 1$ an integer, and zero otherwise.  
\end{lemma}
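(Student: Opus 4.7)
The plan is to reduce the statement to the classical Guinand-Weil formula for primitive Dirichlet characters \cite[Lemma 5]{CF}, applied to the primitive character $\chi^*$ modulo $q^*\mid q$ that induces each $\chi$ modulo $q$, and then to assemble the result by multiplying by $\overline{\chi(b)}$ and summing over $\chi$ modulo $q$. The orthogonality relation
\[
\sum_{\chi\,\mathrm{mod}\,q}\overline{\chi(b)}\,\chi(n)=\varphi(q)\,\mathbf{1}\{n\equiv b\pmod q\}
\]
will convert the prime side into the arithmetic-progression sum appearing in the statement, while the pole at $s=1$ of $\zeta(s)$ (which is the $L$-function attached to the trivial character modulo $1$ that induces the principal character $\chi_0$ modulo $q$) will survive the averaging and contribute the two $h(\pm 1/(2i))$ boundary terms, since $\overline{\chi_0(b)}=1$ when $\gcd(b,q)=1$.

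The key steps I would carry out are: (i) record that the nontrivial zeros of $L(s,\chi)$ coincide with those of $L(s,\chi^*)$, thanks to the factorization $L(s,\chi)=L(s,\chi^*)\prod_{p\mid q,\,p\nmid q^*}(1-\chi^*(p)p^{-s})$, so the left-hand side is unchanged when we pass to primitive data; (ii) apply the classical explicit formula to $\chi^*$, producing a pole term only for $\chi^*=1$, a Gamma integral with parameter $\mathfrak{a}_{\chi^*}=\mathfrak{a}_{\chi}$ (the parity agrees since $\chi(-1)=\chi^*(-1)$), a prime sum with coefficients $\chi^*(n)$ and $\overline{\chi^*(n)}$ paired to $\widehat h(\pm\log n/2\pi)$, and an additive error of size $\|\widehat h\|_\infty\log q$; (iii) multiply by $\overline{\chi(b)}$ and sum over $\chi$, so that the pole term yields exactly $h(1/(2i))+h(-1/(2i))$, the Gamma integral survives with $\mathfrak{a}_\chi$ and the weight $\overline{\chi(b)}$ as stated, and orthogonality collapses the first prime sum into the desired sum over $n\equiv b\pmod q$, while the companion sum (where we do not take complex conjugate of the character inside the orthogonality) is left in the unreduced form $\sum_\chi\overline{\chi(bn)}$.

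The main obstacle in step (iii) is that the coefficients $\chi^*(n)$ and $\chi(n)$ differ on prime powers $p^k$ with $p\mid q$ and $p\nmid q^*$, so orthogonality does not directly apply to the prime sum produced by the classical formula. To handle this I would write $\chi^*(n)=\chi(n)+\bigl(\chi^*(n)-\chi(n)\bigr)$ and observe that the correction is supported on the prime powers described above, giving a contribution bounded by
\[
\|\widehat h\|_\infty\sum_{p\mid q}\sum_{k\geq 1}\frac{\log p}{p^{k/2}}\;\ll_q\;\|\widehat h\|_\infty.
\]
Combined with the $O(\|\widehat h\|_\infty\log q)$ remainder from the classical formula, summed over $\varphi(q)$ characters, this fits inside the $O(\|\widehat h\|_\infty)$ error term of the statement (with the implicit constant depending on $q$). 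The same bound handles the analogous discrepancy for the companion sum involving $\widehat h(-\log n/2\pi)$, closing the argument.
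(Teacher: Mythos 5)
Your proposal is correct and follows essentially the same route as the paper: reduce to the primitive inducing character $\chi^*$, invoke the classical Guinand--Weil formula, handle the discrepancy between $\chi$ and $\chi^*$ via a correction supported on prime powers $p^k$ with $p\mid q$ (the paper phrases this with $\widetilde\chi_0=1-\chi_0$, yielding the same bound $\sum_{p\mid q,\,k\geq 1}(\log p)\,p^{-k/2}\ll 1$), treat the principal character separately via the explicit formula for $\zeta(s)$ to pick up the two pole terms $h(\pm 1/(2i))$, and then multiply by $\overline{\chi(b)}$ and sum, applying orthogonality to the $\widehat h(\log n/2\pi)$ sum while leaving the companion $\sum_\chi\overline{\chi(bn)}$ factor unreduced.
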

\begin{proof}
Let $\chi$ be a primitive Dirichlet character modulo $q$. The Guinand-Weil explicit formula for $\chi$ (see \cite[Lemma 5]{CF}) states that
\begin{align} \label{22_4_6:38pm}
	\begin{split}
	\sum_{\rho_\chi} h\left(\frac{\rho_{\chi} - \tfrac12}{i}\right)= & \bigg\{\frac{\log q}{2\pi}\,\widehat{h}(0)-\frac{\log\pi}{2\pi}\,\widehat{h}(0)\bigg\} +\frac{1}{2\pi}\int_{-\infty}^\infty h(u)\,{\rm Re}\,\frac{\Gamma'}{\Gamma}\bigg(\frac{1}{4}+\frac{\mathfrak{a}_\chi}{2}+\frac{iu}{2}\bigg)\,du  \\
	& \,\,\, -\frac{1}{2\pi}\sum_{n=2}^\infty\frac{\Lambda(n)}{\sqrt{n}}\left\{\chi(n)\, \widehat h\left(\frac{\log n}{2\pi}\right)+\overline{\chi(n)}\, \widehat h\left(\frac{-\log n}{2\pi}\right)\right\},
		\end{split}
\end{align}
where the sum runs over all non-trivial zeros $\rho_\chi$ of $L(s, \chi)$, $\mathfrak{a}_\chi=0$ if $\chi(-1)=1$ and $\mathfrak{a}_\chi=1$ if $\chi(-1)=-1$. Note that
$$
\bigg|\dfrac{\log q}{2\pi}\widehat{h}(0)-\dfrac{\log \pi}{2\pi}\widehat{h}(0)\bigg|\ll \|\widehat{h}\|_{\infty}.
$$
We want to establish a similar formula as \eqref{22_4_6:38pm} for an imprimitive Dirichlet character modulo $q$. We know that each imprimitive character $\chi$ modulo $q$ is induced by a unique primitive character $\chi^{*}$ modulo $f$, with $f|q$ and $f<q$. This implies that $\chi(n)=\chi_0(n)\chi^{*}(n)$ for all $n\in\Z$, where $\chi_0(n)$ is the principal character modulo $q$, and
\begin{align} \label{22_27_21_04}
L(s,\chi)=L(s,\chi^{*})\prod_{p|q}\bigg(1-\dfrac{\chi^{*}(p)}{p^s}\bigg).
\end{align} 
If we write $\widetilde{\chi}_{0}(n)=1-\chi_{0}(n)$, then $\chi^{*}(n)=\chi(n)+\chi^{*}(n)\widetilde{\chi}_{0}(n)$. Let $\chi$ be a non-principal imprimitive character modulo $q$. Therefore, using the Guinand-Weil explicit formula for $\chi^{*}$ we get that
\begin{align} \label{22_4_6:392pm}
	\begin{split}
	\sum_{\rho_{\chi^{*}}} h\left(\frac{\rho_{\chi} - \tfrac12}{i}\right)&= \bigg\{\frac{\log f}{2\pi}\,\widehat{h}(0)-\frac{\log\pi}{2\pi}\,\widehat{h}(0)\bigg\}+\frac{1}{2\pi}\int_{-\infty}^\infty h(u)\,{\rm Re}\,\frac{\Gamma'}{\Gamma}\bigg(\frac{1}{4}+\frac{\mathfrak{a}_{\chi^{*}}}{2}+\frac{iu}{2}\bigg)\,du  \\
	& \, \, \, \, \, \, \,\,\,\,\, -\frac{1}{2\pi}\sum_{n=2}^\infty\frac{\Lambda(n)}{\sqrt{n}}\left\{\chi(n)\, \widehat h\left(\frac{\log n}{2\pi}\right)+\overline{\chi(n)}\, \widehat h\left(\frac{-\log n}{2\pi}\right)\right\}  \\
	& \, \, \, \, \, \, \,\,\,\,\, -\frac{1}{2\pi}\sum_{n=2}^\infty\frac{\Lambda(n)}{\sqrt{n}}\left\{\chi^{*}(n)\widetilde{\chi}_{0}(n)\, \widehat h\left(\frac{\log n}{2\pi}\right)+\overline{\chi^{*}(n)}\widetilde{\chi}_{0}(n)\, \widehat h\left(\frac{-\log n}{2\pi}\right)\right\}.
	\end{split}
\end{align}
Note that $\widetilde{\chi}_{0}(n)=0$ when $n$ and $q$ are coprime. Therefore the last sum can be bounded in the following form
\begin{align}\label{22_4_6:392pm2}
\Bigg|\frac{1}{2\pi}\sum_{n=2}^\infty\frac{\Lambda(n)}{\sqrt{n}}\chi^{*}(n)\widetilde{\chi}_{0}(n)\,\widehat{h}\left(\frac{\log n}{2\pi}\right)\Bigg| \ll \displaystyle\sum_{p|q, \hspace{0.05cm}k\geq 1}\dfrac{\log p}{p^{k/2}}\,\|\widehat{h}\|_{\infty}\ll \|\widehat{h}\|_{\infty}.
\end{align}
On another hand, by \eqref{22_27_21_04} we have that $L(s,\chi^*)$ and $L(s,\chi^*)$ have the same set of non-trivial zeros. Therefore, we conclude in \eqref{22_4_6:392pm} that for each imprimitive character $\chi$ modulo $q$, it follows
\begin{align}  \label{last_sum}
\begin{split}
\sum_{\rho_{\chi}} h\left(\frac{\rho_{\chi} - \tfrac12}{i}\right)&= \frac{1}{2\pi}\int_{-\infty}^\infty h(u)\,{\rm Re}\,\frac{\Gamma'}{\Gamma}\bigg(\frac{1}{4}+\frac{\mathfrak{a}_{\chi}}{2}+\frac{iu}{2}\bigg)\,du +O\bigl(\|\widehat{h}\|_{\infty}\bigl)  \\
& \, \, \, \, \, \,\,\, -\frac{1}{2\pi}\sum_{n=2}^\infty\frac{\Lambda(n)}{\sqrt{n}}\left\{\chi(n)\, \widehat h\left(\frac{\log n}{2\pi}\right)+\overline{\chi(n)}\, \widehat h\left(\frac{-\log n}{2\pi}\right)\right\},
\end{split}
\end{align}
where $\mathfrak{a}_\chi\in \{0,1\}$. Finally, for the principal character $\chi_0(n)$ we use the Guinand-Weil explicit formula for the Riemann zeta function (see \cite[Lemma 8]{CChiM}), that states that
\begin{align} \label{22_4_6:39pm}
\begin{split}
	\displaystyle\sum_{\rho}h\left(\frac{\rho-\frac12}{i}\right) & = h\left(\dfrac{1}{2i}\right)+h\left(-\dfrac{1}{2i}\right)-\dfrac{\log\pi}{2\pi}\,\widehat{h}(0)+\dfrac{1}{2\pi}\int_{-\infty}^{\infty}h(u)\,\re{\dfrac{\Gamma'}{\Gamma}\bigg(\dfrac{1}{4}+\dfrac{iu}{2}\bigg)}\,du  \\
	 &  \ \ \  -\dfrac{1}{2\pi}\displaystyle\sum_{n\geq2}\dfrac{\Lambda(n)}{\sqrt{n}}\left(\chi_0(n)\,\widehat{h}\left(\dfrac{\log n}{2\pi}\right)+\overline{\chi_0(n)}\,\widehat{h}\left(\dfrac{-\log n}{2\pi}\right)\right)\,  \\
	 &  \ \ \  -\dfrac{1}{2\pi}\displaystyle\sum_{n\geq2}\dfrac{\Lambda(n)}{\sqrt{n}}\left(\widetilde{\chi}_0(n)\,\widehat{h}\left(\dfrac{\log n}{2\pi}\right)+\overline{\widetilde{\chi}_0(n)}\,\widehat{h}\left(\dfrac{-\log n}{2\pi}\right)\right),
\end{split}
\end{align}
where the sum runs over all non-trivial zeros $\rho$ of  $\zeta(s)$, and therefore it runs over all non-trivial zeros $\rho_{\chi_0}$ of $L(s,\chi_0)$. Note that the last sum in \eqref{22_4_6:39pm} can be bounded as \eqref{22_4_6:392pm2}. Therefore, multiplying \eqref{22_4_6:38pm}, \eqref{last_sum} and \eqref{22_4_6:39pm} by $\overline{\chi(b)}$ (note that in the last case $\overline{\chi_0(b)}=1$) and summing this results to obtain the final sum over all character modulo $q$ we get (inserting the respective error terms)
\begin{align*} 
\displaystyle\sum_{\chi}\overline{\chi(b)}\displaystyle\sum_{\rho_\chi} h\left(\frac{\rho_{\chi} - \tfrac12}{i}\right) & = h\left(\dfrac{1}{2i}\right)+h\left(-\dfrac{1}{2i}\right)  + \frac{1}{2\pi}\displaystyle\sum_{\chi}\overline{\chi(b)}\int_{-\infty}^\infty h(u)\,{\rm Re}\,\frac{\Gamma'}{\Gamma}\bigg(\frac{1}{4}+\frac{\mathfrak{a}_\chi}{2}+\frac{iu}{2}\bigg)\,du  \\
& \, \, \, \, \, -\frac{1}{2\pi}\displaystyle\sum_{\chi}\sum_{n=2}^\infty\frac{\Lambda(n)}{\sqrt{n}}\left\{\overline{\chi(b)}\chi(n)\, \widehat h\left(\frac{\log n}{2\pi}\right)+\overline{\chi(b)\chi(n)}\, \widehat h\left(\frac{-\log n}{2\pi}\right)\right\}  \\
&  \, \, \, \, \,  +O\bigl(\|\widehat{h}\|_{\infty}\bigl),
\end{align*}
where the sums run over all Dirichlet characters modulo $q$, and $\mathfrak{a}_{\chi_0}=0$. Then, using Fubini's theorem and the fact that
\[
\sum_{\chi}\overline{\chi(b)}\chi(n)= \begin{cases}
\varphi(q) &   \text{if } n\equiv b \,(\mathrm{mod}\, q) \\
0 &  \text{if } n\not\equiv b \,(\mathrm{mod}\, q),
\end{cases}
\]
we obtain the desired result.
\end{proof}

\subsection{Brun-Titchmarsh inequality} We will use the following version of the Brun-Titchmarsh inequality due to Montgomery and Vaughan \cite[Theorem 2]{MV}:
\begin{align*}
\pi(x+y;q,b)-\pi(x;q,b)<\dfrac{2y}{\varphi(q)\log(y/q)}
\end{align*}
for all $x\geq1$ and $y>q$. In our case we will use this inequality in the following form: For any $\varepsilon>0$ sufficiently small and $x\geq q^{1/\varepsilon}$ we have
\begin{align} \label{16_57_20_12}
\pi(x+\sqrt{x};q,b)-\pi(x;q,b)<\frac{4}{(1-2\varepsilon)\,\varphi(q)}\dfrac{\sqrt{x}}{\log x}.
\end{align}

\medskip

\section{Proof of Theorem \ref{16_53_20_12}: First part}
We follow the idea developed in \cite[Section 5]{CMS}. We start by fixing $q\geq 3$ and $b\geq 1$ coprime and assuming GRH. Also, we fix an even and bandlimited Schwartz function $F:\R \to\R$ such that $F(0)>0$ and $\supp(\widehat{F})\subset [-N,N]$ for some parameter $N\geq 1$. Therefore, $F$ extends to an entire function, and using the Phragm\'{e}n-Lindel\"{o}f principle the hypotheses of Lemma~\ref{Guinand-weil} are satisfied. Throughout this proof, the error terms can depend on $q$, $b$, and $F$. Let $0<\Delta\leq 1$ and $1<a$ be free parameters (to be chosen later) such that
	\begin{equation} \label{5:46pm_15_8}
2\pi\Delta N\leq \log a.
\end{equation}
We need to have in mind that $a\to\infty$ and $\Delta\to 0$. Considering the  function $f(z)=\Delta F(\Delta z)$ we have supp$(\widehat{f})\subset[-\Delta N,\Delta N]$. Applying Lemma \ref{Guinand-weil} for the function $h(z)=f(z)a^{iz}$ we obtain
\begin{align} \label{UsingGW}
\begin{split}
\displaystyle\sum_{\chi}\overline{\chi(b)}\displaystyle\sum_{\gamma_{\chi}} h(\gamma_{\chi})
& = \bigg\{h\left(\dfrac{1}{2i}\right)+h\left(-\dfrac{1}{2i}\right)\bigg\}  + \frac{1}{2\pi}\displaystyle\sum_{\chi}\overline{\chi(b)}\int_{-\infty}^\infty h(u)\,{\rm Re}\,\frac{\Gamma'}{\Gamma}\bigg(\frac{1}{4}+\frac{\mathfrak{a}_\chi}{2}+\frac{iu}{2}\bigg)\,du  \\
& \, \, \, \, \, \,\,\,\,\,\, -\frac{\varphi(q)}{2\pi}\sum_{n\,\equiv\, b \, (\mathrm{mod}\, q)}\frac{\Lambda(n)}{\sqrt{n}}\,\widehat h\left(\frac{\log n}{2\pi}\right)-\frac{1}{2\pi}\sum_{n=2}^\infty\frac{\Lambda(n)}{\sqrt{n}}\Bigg(\displaystyle\sum_{\chi}\overline{\chi(bn)}\Bigg) \widehat h\left(\frac{-\log n}{2\pi}\right) \\
&  \, \, \, \, \, \,\,\,\,\,\,+ O\bigl(\|\widehat{h}\|_{\infty}\bigl),
\end{split}
\end{align}
where $\gamma_{\chi}$ are the imaginary parts of the non-trivial zeros of $L(s,\chi)$. We start by estimating some terms on the right-hand side of \eqref{UsingGW}. Using the estimate in \cite[p. 553]{CMS} we get
\begin{align} \label{R1}
h\left(\dfrac{1}{2i}\right)+h\left(-\dfrac{1}{2i}\right)=\Delta F(0)\big(\sqrt{a}+\sqrt{a^{-1}}\big)+O\big(\Delta^2\sqrt{a}\big).
\end{align}
Using Stirling's formula with a minor adjustment to the estimate in \cite[p. 554]{CMS} we have
$$
\int_{-\infty}^\infty h(u)\,{\rm Re}\,\frac{\Gamma'}{\Gamma}\left(\frac{1}{4}+\frac{\mathfrak{a}_\chi}{2}+\frac{iu}{2}\right)\,du = O(1).
$$
Therefore,
\begin{align} \label{R2}
\frac{1}{2\pi}\displaystyle\sum_{\chi}\overline{\chi(b)}\int_{-\infty}^\infty h(u)\,{\rm Re}\,\frac{\Gamma'}{\Gamma}\left(\frac{1}{4}+\frac{\mathfrak{a}_\chi}{2}+\frac{iu}{2}\right)\,du = O(1).
\end{align}
Also, note that by \eqref{5:46pm_15_8} we have for $n\geq 2$,
$$
\widehat h\left(\frac{-\log n}{2\pi}\right)=0.
$$
This implies that
\begin{align} \label{R3}
\frac{1}{2\pi}\sum_{n=2}^\infty\frac{\Lambda(n)}{\sqrt{n}}\Bigg(\displaystyle\sum_{\chi}\overline{\chi(bn)}\Bigg) \widehat h\left(\frac{-\log n}{2\pi}\right)=0.
\end{align}
Inserting \eqref{R1}, \eqref{R2} and \eqref{R3} into \eqref{UsingGW}, it follows that
\begin{align*}
\displaystyle\sum_{\chi}\overline{\chi(b)}\displaystyle\sum_{\gamma_\chi} h(\gamma_\chi)= \Delta F(0)\big(\sqrt{a}+\sqrt{a^{-1}}\big)+O\big(\Delta^2\sqrt{a}\big)  -\frac{\varphi(q)}{2\pi}\sum_{n\,\equiv\, b \, (\mathrm{mod} \, q)}\frac{\Lambda(n)}{\sqrt{n}}\,\widehat h\left(\frac{\log n}{2\pi}\right)+ O(1).
\end{align*}
Then,
\begin{align} \label{UsingGW2}
\begin{split}
\Delta F(0)\,\sqrt{a}\leq & \displaystyle\sum_{\chi}\displaystyle\sum_{\gamma_\chi} \big|h(\gamma_\chi)\big| + \frac{\varphi(q)}{2\pi}\sum_{n\,\equiv\, b \, (\mathrm{mod}\, q)}\frac{\Lambda(n)}{\sqrt{n}}\,(\widehat h)^+\left(\frac{\log n}{2\pi}\right)
+O\big(\Delta^2\sqrt{a}\big)+ O(1).
\end{split}
\end{align}
Let us  estimate the terms on the right-hand side of \eqref{UsingGW2}. We recall that for each primitive Dirichlet character modulo $q$, we have the formula \cite[Chapter 16]{D}
 $$
N(T,\chi)=\dfrac{T}{\pi}\log\bigg(\dfrac{qT}{2\pi}\bigg) - \dfrac{T}{\pi} + O(\log T+\log q),
 $$
 where $N(T,\chi)$ denotes the number of zeros of $L(s,\chi)$ in the rectangle $0<\sigma<1$ and $|\gamma|\leq T$. Using integration by parts as in \cite[Eq. (5.4)]{CMS} we obtain for each primitive Dirichlet character modulo $q$ that
\begin{align} \label{17_27_20_123}
\displaystyle\sum_{\gamma_\chi} |h(\gamma_\chi)| & = \frac{\log(1/2\pi\Delta)}{2\pi}\norm{F}_1 +O(1).
\end{align}
Note that the main term in the above expression is independent of $q$. Then, recalling the relation \eqref{22_27_21_04}, we have that \eqref{17_27_20_123} holds for each non-principal imprimitive character modulo $q$. For the case of the principal character $\chi_0$, we use the estimate for the zeros of the Riemann zeta-function (see \cite[Eq. (5.4)]{CMS}). Therefore, considering that the number of Dirichlet characters modulo $q$ is $\varphi(q)$ we conclude that
\begin{align} \label{17_27_20_12}
\displaystyle\sum_{\chi}\displaystyle\sum_{\gamma_\chi} \big|h(\gamma_\chi)\big| = \varphi(q)\,\frac{\log(1/2\pi\Delta)}{2\pi}\norm{F}_1 +O(1).
\end{align}
Now, we want to bound the second sum on the right-hand side of \eqref{UsingGW2}. Using the relation between the functions $h$ and $F$, this sum is
\begin{align} \label{19_49_08_20}
\sum_{n\,\equiv\, b \, (\mathrm{mod} \, q)}\frac{\Lambda(n)}{\sqrt{n}}\,(\widehat F)^+\left(\frac{\log(n/a)}{2\pi\Delta}\right).
\end{align}
Fix $\alpha\geq 0$ and assume that $c_1>0$ is a fixed constant such that
$$
\displaystyle\liminf_{x\to\infty}\,\,\dfrac{\pi\big(x+c_1\,\varphi(q)\sqrt{x}\log x;q,b\big)-\pi(x;q,b)}{\sqrt{x}}\leq \alpha.
$$
This implies that for $\varepsilon>0$, there exists a sequence of $x\to\infty$ such that
\begin{align*}
\dfrac{\pi\big(x+c_1\,\varphi(q)\sqrt{x}\log x;q,b\big)-\pi(x;q,b)}{\sqrt{x}}< \alpha+\varepsilon.
\end{align*}
For each $x$ in the sequence, we choose $a$ and $\Delta$ such that
\begin{align*}
\big[x,x+c_1\,\varphi(q)\sqrt{x}\log x\big]=\Big[ae^{-2\pi\Delta},ae^{2\pi\Delta}\Big]
\end{align*}	
and this implies that (see \cite[Eq. (5.7)-(5.8)]{CMS})
\begin{align*}
4\pi\Delta=c_1\,\varphi(q)\dfrac{\log x}{\sqrt{x}} + O\bigg(\dfrac{\log^2x}{x}\bigg),
\end{align*}
and
\begin{align*}
a=x+O(\sqrt{x}\log x). 
\end{align*}
Since $\supp(\widehat{F})\subset [-N,N]$, the sum in \eqref {19_49_08_20} runs over $ae^{-2\pi\Delta N}\leq n\leq ae^{2\pi\Delta N}$ with $n\equiv b \,(\mathrm{mod}\, q)$. Then the contribution of the prime powers $n=p^k$ with $n\equiv b \,(\mathrm{mod}\, q)$ in that interval is $O(1)$. The contribution of the (at most) $(\alpha+\varepsilon)\sqrt{x}$ primes in the interval $(x,x+c_1\,\varphi(q)\sqrt{x}\log x]=(ae^{-2\pi\Delta},ae^{2\pi\Delta}]$ to the sum \eqref{19_49_08_20} is bounded above by (using that $(\widehat{F})^{+}(t)\leq\|F\|_1$)
$$\|F\|_1\displaystyle\sum_{\substack{p\in(ae^{-2\pi\Delta},ae^{2\pi\Delta}]\\ p\,\equiv \,b \,(\mathrm{mod}\, q) }}\dfrac{\log p}{\sqrt{p}}\leq\|F\|_1(\alpha+\varepsilon)\sqrt{x}\,\,\dfrac{\log x}{\sqrt{x}}=\|F\|_1(\alpha+\varepsilon)\log x.$$
Finally, to estimate the contribution of the primes in the intervals $[ae^{-2\pi\Delta N},ae^{-2\pi\Delta}]$ and $[ae^{2\pi\Delta},ae^{2\pi\Delta N}]$ we use the Brun-Titchmarsh inequality \eqref{16_57_20_12}. We also need the following estimate: For $g\in C^1([a,b])$ we have
\begin{equation} \label{22_8_8_20}
0\leq S(g^+,P)-\displaystyle\int_a^bg^+(t)\,dt\leq \delta(b-a)\sup_{x\in [a,b]}|g^{\prime}(x)|,
\end{equation}
where $P$ is a partition of $[a,b]$ of norm at most $\delta$ and $S(g^+,P)$ is the upper Riemann sum of the function $g^+$ and the partition $P$. We apply \eqref{22_8_8_20} with the function
$$g(t)=\widehat{F}\bigg(\frac{\log (t/a)}{2\pi\Delta}\bigg),$$
and the partition $P=\{x_0<...<x_J\}$ that cover the interval $[ae^{2\pi\Delta},ae^{2\pi\Delta N}]\subset \cup_{j=0}^{J-1}[x_j,x_{j+1}]$, with $x_0=ae^{2\pi\Delta}$, $x_{j+1}=x_j+\sqrt{x_j}$. If we define $M_j=\sup\{g^+(x) : x\in[x_j.x_{j+1}]\}$, then $S(g^+,P)=\sum_{j=0}^{J-1}M_j\sqrt{x_j}$. Therefore, by \eqref{16_57_20_12} and \eqref{22_8_8_20} it follows
\begin{align}  \label{23_37_09_20}
\displaystyle\sum_{\substack{1\leq\frac{\log p/a}{2\pi\Delta}\leq N \\ p\,\equiv\, b \,(\mathrm{mod} \, q)}}&\dfrac{\log p}{\sqrt{p}}\,(\widehat{F})^+\bigg(\frac{\log (p/a)}{2\pi\Delta}\bigg) \nonumber \\
&\leq\displaystyle\sum_{j=0}^{J-1}\bigg(\dfrac{\log x_j}{\sqrt{x_j}}M_j\bigg)\dfrac{4\sqrt{x_j}}{(1-2\varepsilon)\,\varphi(q)\log x_j}  \leq \dfrac{4}{(1-2\varepsilon)\,\varphi(q)}\Bigg(\dfrac{1}{\sqrt{a}}\displaystyle\sum_{j=0}^{J-1}M_j\sqrt{x_j}\Bigg) \nonumber \\
& = \dfrac{4}{(1-2\varepsilon)\,\varphi(q)}\Bigg(\dfrac{1}{\sqrt{a}}\displaystyle\int_{x_0}^{x_J}(\widehat{F})^+\bigg(\frac{\log (t/a)}{2\pi\Delta}\bigg)\,dt +\dfrac{1}{\sqrt{a}}\Bigg(S(g^+,P)-\displaystyle\int_{x_0}^{x_J}(\widehat{F})^+\bigg(\frac{\log (t/a)}{2\pi\Delta}\bigg)\,dt\Bigg)\Bigg) \nonumber  \\
& \leq \dfrac{4}{(1-2\varepsilon)\,\varphi(q)}\Bigg(\sqrt{a}(2\pi\Delta)\displaystyle\int_1^{N}(\widehat{F})^+(t)\,e^{2\pi\Delta t}\,dt +O\Bigg(\dfrac{1}{2\pi\Delta}(e^{2\pi\Delta N}-e^{2\pi\Delta})\Bigg)\Bigg) \nonumber  \\
& \leq \dfrac{4\,\sqrt{a}\,(2\pi\Delta)}{(1-2\varepsilon)\,\varphi(q)}\,\displaystyle\int_1^{N}(\widehat{F})^+(t)\,dt+O(1),
\end{align}
where we used that $0\leq e^x-1\leq 2x$ for $0\leq x\leq 1$. We treat the other interval in a similar way, obtaining
\begin{align} \label{23_389_09_20}
\displaystyle\sum_{\substack{-N\leq \frac{\log p/a}{2\pi\Delta}\leq -1 \\ p\,\equiv\, b \,(\mathrm{mod}\, q)}}&\dfrac{\log p}{\sqrt{p}}\,(\widehat{F})^+\bigg(\frac{\log (p/a)}{2\pi\Delta}\bigg)  \leq \dfrac{4\,\sqrt{a}\,(2\pi\Delta)}{(1-2\varepsilon)\,\varphi(q)}\,\displaystyle\int_{-N}^{-1}(\widehat{F})^+(t)\,dt+O(1).
\end{align}
Combining \eqref{23_37_09_20} and \eqref{23_389_09_20} we obtain






\begin{align*}
\displaystyle\sum_{\substack{1<\left|\frac{\log(p/a)}{2\pi\Delta}\right|\leq N \\ p\,\equiv\, b \, (\mathrm{mod}\, q)}}\dfrac{\log p}{\sqrt{p}}\,(\widehat{F})^+\bigg(\dfrac{\log(p/a)}{2\pi\Delta}\bigg) &  \leq\frac{4\,\sqrt{a}(2\pi\Delta)}{(1-2\varepsilon)\varphi(q)}\int_{[-1,1]^c}(\widehat{F})^+(t)\,dt + O(1).
\end{align*}
Therefore, grouping the previous estimates, we conclude that
\begin{align} \label{17_45_20_12}
\sum_{n\,\equiv\, b \,(\mathrm{mod}\, q)}\frac{\Lambda(n)}{\sqrt{n}}\,(\widehat h)^+\left(\frac{\log n}{2\pi}\right)\leq \norm{F}_1(\alpha+\varepsilon)\log x + \frac{4\,\sqrt{a}\,(2\pi\Delta)}{(1-2\varepsilon)\varphi(q)}\int_{[-1,1]^c}(\widehat{F})^{+}(t)\,dt+ O(1).
\end{align}
Then, inserting the estimates \eqref{17_27_20_12} and \eqref{17_45_20_12} in \eqref{UsingGW2} it follows that
\begin{align*}
\begin{split}
\Delta\sqrt{a}\bigg(F(0)-\frac{4}{(1-2\varepsilon)}\int_{[-1,1]^c}(\widehat{F})^{+}(t)\,dt\bigg) \leq & \dfrac{\varphi(q)}{2\pi}\norm{F}_1(\alpha+\varepsilon)\log x + \varphi(q)\frac{\log(1/2\pi\Delta)}{2\pi}\norm{F}_1 +O(1).
\end{split}
\end{align*}
Sending $x\to\infty$ along the sequence we obtain that
$$
c_1\leq (1+2\alpha+2\varepsilon)\dfrac{\norm{F}_1}{F(0)-\frac{4}{(1-2\varepsilon)}\int_{[-1,1]^c}(\widehat{F})^{+}(t)\,dt},
$$
and with $\varepsilon\to 0$ we get
$$
c_1\leq (1+2\alpha)\dfrac{\norm{F}_1}{F(0)-4\int_{[-1,1]^c}(\widehat{F})^{+}(t)\,dt}.
$$
Finally, we recall that in \cite[Subsection 4.1]{CMS} it is showed that to find the sharp constants $\mathcal{C}^{+}(A)$ in \eqref{17_53_08_20}, without loss of generality we can restrict to the subset of $\mathcal{A}^{+}$ such that $\widehat{F} \in C_{c}^{\infty}(\R)$. Therefore, we conclude
\begin{align*}
\inf\bigg\{c_1>0;\,\, \displaystyle\liminf_{x\to\infty}\dfrac{\pi\big(x+c_1\,\varphi(q)\sqrt{x}\log x;q,b\big)-\pi(x;q,b)}{\sqrt{x}}> \alpha\bigg\}\leq \dfrac{(1+2\alpha)}{\mathcal{C}^{+}(4)}.
\end{align*}

\section{Numerically optimizing the bounds}

We first reformulate \eqref{17_53_08_20} as a convex optimization problem:
\begin{lemma}
	Let $\mathcal F$ be the set of tuples $(f_1,\ldots,f_4)$ with $f_1,\ldots,f_4 \in L^1(\R)$ even, nonnegative, continuous functions such that $\widehat f_1(0) + \widehat f_2(0) = 1$ and $f_1-f_2 = \widehat f_3 - \widehat f_4$.
	For $A \geq 1$ we have
	\[
	\mathcal C^+(A) = \sup_{(f_1,f_2,f_3,f_4) \in \mathcal F}\bigg( f_1(0) - f_2(0) - A \int_{[-1,1]^c} f_3(t)\,dt\bigg).
	\]
\end{lemma}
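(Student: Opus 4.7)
The plan is to prove both inequalities by a correspondence between $F \in \mc{A}^{+}$ and tuples $(f_1,f_2,f_3,f_4)\in\mc{F}$, obtained by decomposing $F$ and $\widehat F$ into their positive and negative parts; the constraint $f_1-f_2=\widehat f_3-\widehat f_4$ is then exactly the Fourier inversion identity $\widehat{\widehat F}=F$ for even $F$. For the direction $\mc{C}^{+}(A)\leq\sup_{\mc{F}}$, I would first use the reduction from \cite[Subsection 4.1]{CMS} so that it suffices to optimize over $F\in\mc{A}^{+}$ with $\widehat F\in C_c^{\infty}(\R)$, rescaled so $\|F\|_1=1$. Setting $f_1=F^{+}$, $f_2=F^{-}$, $f_3=(\widehat F)^{+}$, $f_4=(\widehat F)^{-}$, all four functions are even, continuous, nonnegative, and in $L^{1}(\R)$ (using $\widehat F\in C_c^{\infty}$ for $f_3,f_4$). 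The disjoint support of $F^{+}$ and $F^{-}$ gives $\widehat f_1(0)+\widehat f_2(0)=\|F\|_1=1$, and Fourier inversion on the even function $F$ yields $\widehat f_3-\widehat f_4=\widehat{\widehat F}=F=f_1-f_2$. The two objectives then coincide term by term.

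For the direction $\mc{C}^{+}(A)\geq\sup_{\mc{F}}$, take any $(f_1,f_2,f_3,f_4)\in\mc{F}$ and set $F:=f_1-f_2$, which is even, continuous, and in $L^{1}$. Writing $h:=f_3-f_4\in L^{1}$, the tuple constraint reads $\widehat h=f_1-f_2\in L^{1}$, so Fourier inversion applies to $h$ and gives $\widehat F=\widehat{\widehat h}=h=f_3-f_4$. Nonnegativity of $f_3,f_4$ then yields $(\widehat F)^{+}\leq f_3$ pointwise, and the triangle inequality gives $\|F\|_1\leq \widehat f_1(0)+\widehat f_2(0)=1$. If the tuple's objective $f_1(0)-f_2(0)-A\int_{[-1,1]^c}f_3$ is positive, then $F\not\equiv 0$ and
\[
\mc{C}^{+}(A)\;\geq\;\frac{F(0)-A\int_{[-1,1]^c}(\widehat F)^{+}(t)\,dt}{\|F\|_1}\;\geq\; f_1(0)-f_2(0)-A\int_{[-1,1]^c}f_3(t)\,dt;
\]
if the objective is nonpositive, the inequality is immediate from $\mc{C}^{+}(A)\geq 0$, which is witnessed by any nontrivial $F$ with $\widehat F\geq 0$ supported in $[-1,1]$.

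The main technical point is legitimizing Fourier inversion in the $\geq$ direction, since the individual transforms $\widehat f_3$ and $\widehat f_4$ are only bounded and continuous and need not be integrable; the fix is to apply inversion to the difference $h=f_3-f_4$, whose transform equals $f_1-f_2\in L^{1}$ by hypothesis. The only remaining subtlety is that $F=f_1-f_2$ may be identically zero or yield a negative objective value, both handled by the trivial bound $\mc{C}^{+}(A)\geq 0$.
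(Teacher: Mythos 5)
Your proof is correct and takes essentially the same approach as the paper: decompose $F$ and $\widehat F$ into positive and negative parts, and identify the constraint $f_1-f_2 = \widehat f_3 - \widehat f_4$ with Fourier inversion. You are in fact slightly more careful than the paper's terse proof, by invoking the reduction to $\widehat F \in C_c^\infty(\R)$ from \cite[Subsection 4.1]{CMS} to guarantee $f_3,f_4 \in L^1$, by applying inversion only to the difference $h=f_3-f_4$ (whose transform, $f_1-f_2$, is $L^1$ by hypothesis, whereas $\widehat f_3$ and $\widehat f_4$ individually need not be), and by explicitly handling the cases $F\equiv 0$ or a nonpositive objective via $\mathcal C^+(A) > 0$.
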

\begin{proof}
	Given $(f_1,f_2,f_3,f_4) \in \mathcal F$ we set $F := f_1-f_2 = \widehat f_3 - \widehat f_4$. Then,
	\[
	\|F\|_1 = \|f_1-f_2\|_1 \leq \|f_1\|_1 + \|f_2\|_1 = \widehat f_1(0) + \widehat f_2(0) = 1.
	\]
	Further, $F(0) = f_1(0) - f_2(0)$ and
	$
	(\widehat F)^+(x) = (f_3 - f_4)^+(x) \leq f_3(x).
	$
	This shows
	\[
	\mathcal C^+(A) \ge \sup_{(f_1,f_2,f_3,f_4) \in \mathcal F} \bigg(f_1(0) - f_2(0) - A \int_{[-1,1]^c} f_3(t)\,dt\bigg).
	\]
	
	On the other hand, for $F \in \mathcal A^+$ with $F \neq 0$, we define $f_1 := F^+$, $f_2 := F^-$, $f_3 := (\widehat F)^+$, and $f_4 := (\widehat F)^-$, where $f_i^-(x) = \min\{f_i(x),0\}$. Then $\|F\|_1 = \widehat f_1(0) + \widehat f_2(0)$, $F(0) = f_1(0) - f_2(0)$, and $(\widehat F)^+ = f_3$, which shows the other inequality.
\end{proof}

To find good functions we restrict to functions $f_i$ of the form $f_i(x) = p_i(x^2) e^{-\pi x^2}$ where $p_i$ is a polynomial. We then use the sum-of-squares characterization
\[
p_i(u) = v_d(u)^{\sf T} Q_i v_d(u) + u\, v_{d-1}(u)^{\sf T} R_i v_{d-1}(u),
\]
where $Q_i$ and $R_i$ are positive semidefinite matrices, and $v_k(u)$ a vector whose entries form a basis for the polynomials of degree at most $k$. This forces $p_i$ to be nonnegative on $[0,\infty)$, and each polynomial that is nonnegative on $[0,\infty)$ is of this form. For the numerical conditioning a good choice for the basis is
\[
v_k(u) = \big(L_0^{-1/2}(\pi u), \ldots, L_k^{-1/2}(\pi u)\big),
\]
where $L_i^{-1/2}$ is the Laguerre polynomial of degree $i$ with parameter $-1/2$.

The conditions $\widehat f_1(0) + \widehat f_2(0) = 1$ and  $f_1-f_2 = \widehat f_3 - \widehat f_4$ are linear in the entries of the matrices $Q_i$ and $R_i$. A numerically stable way of enforcing these constraints is to first compute $\widehat f_3 - \widehat f_4$ by using that the Fourier transform of
$|x|^{2k} e^{-\pi |x|^2}$ is $k!/\pi^k L_k^{-1/2}(\pi |x|^2) e^{-\pi |x|^2}$, and then express $f_1-f_2 -(\widehat f_3 - \widehat f_4)$ in the Laguerre basis and equating all coefficients to zero. The linear objective
\[
f_1(0) - f_2(0) - A \int_{[-1,1]^c} f_3(t)\,dt
\]
can be expressed explicitly as a linear functional in the entries of $Q_i$ and $R_i$, for $i=1,2,3$, using the identity
\[
\int x^m e^{-\pi x^2}\, dx = -\frac{1}{2\pi^{m/2+1/2}} \Gamma\left(\frac{m+1}{2}, \pi x^2\right),
\]
where $\Gamma$ is the upper incomplete gamma function
This reduces the optimization problem to  a semidefinite program that can be optimized with a numerical semidefinite programming solver.

The main issue now is to get a rigorous bound from the numerical output, for which we adapt the approach from \cite{Lo}. Instead of optimizing over $Q_i$ and $R_i$ we fix $\varepsilon = 10^{-20}$ and set $Q_i = \widetilde Q_i + \varepsilon I$ and $R_i = \widetilde R_i + \varepsilon I$. We then optimize over the positive semidefinite matrices $\widetilde Q_i$ and $\widetilde R_i$. In this way the matrices $Q_i$ and $R_i$ will be positive semidefinite even if the matrices $\widetilde Q_i$ and $\widetilde R_i$ computed by the solver have slightly negative eigenvalues. We then use ball arithmetic (using the Arb library \cite{J}) to verify rigorously that all matrices $Q_i$ and $R_i$ are indeed positive semidefinite, and compute a rigorous lower bound $b$ on the smallest eigenvalue of $Q_4$. Then we compute a rigorous upper bound $B$ on the largest (in absolute value) coefficient of $(\widehat f_1- \widehat f_2) - (f_3 - f_4)$ in the basis given by the diagonal and upper diagonal of the matrix $v_d(x) v_d(x)^{\sf T}$. Let $Q_4'$ be the symmetric tridiagonal matrix such that
\[
(\widehat f_1- \widehat f_2) - (f_3 - f_4) = v_d(x)^{\sf T} Q_4'\, v_d(x).
\]
Since $\lambda_\mathrm{min}(Q_4+Q_4') \geq \lambda_\mathrm{min}(Q_4) + \lambda_\mathrm{min}(Q_4')$ and $\lambda_\mathrm{min}(Q_4') \geq -2B$ by the Gershgorin circle theorem, the matrix $Q_4+Q_4'$ is positive semidefinite if $b \geq 2B$. We now verify this inequality and replace $Q_4$ by $Q_4+Q_4'$. Then the identity $(\widehat f_1- \widehat f_2) - (f_3 - f_4) = 0$, and thus the identity $f_1-f_2 = (\widehat f_3 - \widehat f_4)$, holds exactly. Since
\begin{equation}\label{eq:altobj}
	\frac{1}{\widehat f_1(0) + \widehat f_2(0)} \left(f_1(0) - f_2(0) - A \int_{[-1,1]^c} f_3(t)\,dt\right)
\end{equation}
does not depend on $Q_4$, an upper bound on \eqref{eq:altobj}, again computed using ball arithmetic, gives  a rigorous upper bound on $\mathcal C^+(A)$. The matrices $Q_i$ and $R_i$ (computed with degree $d=90$) and a simple script using Julia/Nemo/Arb \cite{Julia,FHHJ,J} to perform  this verification procedure (as well as a script to setup and solve the semidefinite programs using sdpa-gmp \cite{Nakata}) are stored at the link \url{https://arxiv.org/abs/2005.02393} as ancillary files.

\bigskip

\section*{Acknowledgements}
We would like to Emanuel Carneiro and Micah Milinovich for their insightful comments. AC was supported by Grant 275113 of the Research Council of Norway. VJPJ was supported by FAPERJ-Brazil. 

\bigskip

\end{document}